\newtheorem{theorem}{Theorem}[section]
\newtheorem{lemma}{Lemma}[section]
\newtheorem{proposition}{Proposition}[section]
\newtheorem{remark}{Remark}[section]
\theoremstyle{definition} \theoremstyle{remark}
\numberwithin{equation}{section}
\date{}
\begin{document}

\markboth{\\ Pengfei Chen, Yuelong Xiao, Hui Zhang}
{Nonhomogeneous Incompressible Navier-stokes equation}

\date{}

\title{\bf Vanishing Viscosity Limit For the 3D Nonhomogeneous
Incompressible Navier-Stokes Equations With a Slip Boundary Condition
\thanks{This work was supported by the NSFC (No. 11371300)
and the Hunan Provincial Innovation Foundation For Postgraduate(CX2015B205).}}

\author{Pengfei Chen$^1$,~
Yuelong Xiao$^1$
\thanks{\footnotesize {Corresponding author:  xyl@xtu.edu.cn(Y. Xiao),cpfxtu@163.com(P. Chen),
zhangaqtc@126.com(H. Zhang).}}
,~ Hui Zhang$^{2}$
\\[1.8mm]
\footnotesize  {$^1$ School of Mathematics and Computational Science,}\\
\footnotesize  { Xiangtan University, Hunan 411105, P.R. China}\\
\footnotesize  {$^2$ Department of Mathematics, Anqing Teacher's University, }\\
\footnotesize  {Anqing Anhui 246133, P.R. China.}\\
}

\baselineskip 0.23in

\maketitle

\begin{abstract}
In this paper, we investigate the vanishing viscosity limit for
the 3D nonhomogeneous incompressible Navier-Stokes equations
with a slip boundary condition.
We establish the local well-posedness of the strong solutions for
initial boundary value problems for such systems.
Furthermore,
the vanishing viscosity limit process
is established and a strong rate of convergence is obtained
as the boundary of the domain is flat.
In addition, it is needed to add
some additional condition for density to match well
the boundary condition.
\\[2mm]
{\bf Keywords: Nonhomogeneous Incompressible Navier-Stokes Equations, Vanishing viscosity limit,
Slip boundary conditions}.\\[1.2mm]
%{\bf 2010 MSC:} 35R11, 35Q30, 35B65, 76D03, 33E12.
\end{abstract}

\baselineskip 0.234in
\section{Introduction}
\noindent Let $\Omega\subset R^3$ be a class of bounded
smooth domains with flat boundary.
We investigate the 3D
nonhomogeneous incompressible Navier-Stokes equations,
 which is governed by the following equations
\begin{align}
\label{1.1}&\rho\partial_{t}u-\nu\Delta u+\rho u\cdot\nabla u+\nabla p=0,
&{\rm in}\ \Omega,\\
\label{1.2}&\partial_{t}\rho+u\cdot\nabla \rho=0,
&{\rm in}\ \Omega,\\
\label{1.3}&\nabla\cdot u=0,
&{\rm in}\ \Omega,\\
\label{1.4}&u(0,x)=u_0(x),\rho(0,x)=\rho_0(x),
&{\rm in}\ \Omega,
\end{align}
with the slip boundary conditions
\begin{eqnarray}\label{1.5}
 u\cdot n=0, \nabla\times u\cdot\tau=0,
 \ {\rm on}\ \partial\Omega,
\end{eqnarray}
where the positive constant $\nu$ is the viscosity coefficient,
n is the unit outward normal to boundary,
$\tau$ stands for any unit tengential vector,
$\rho$, u and p represents the mass density, the
velocity field and the pressure of the fluids, respectively.
In addition, the initial density $\rho_0(x)$ be assumed to satisfy
$\underline{\rho}\leq\rho_0(x)\leq \overline{\rho}$,
where $\underline{\rho}$ and $\overline{\rho}$ are positive constants.

The interest towards the nonhomogeneous incompressible Navier-Stokes equations
is due to its special physical significance.
The mathematical model was first proposed to describe the motion of the marine
currents in \cite{Kitkin}.
Subsequently it has been greatly developed
since a large number of practical problems have also been solved effectively,
such as the problems from oceanology and hydrology.
The rigorous mathematical analysis of this model
was initiated by the Russian school,
see \cite{avk,SN,Ladyzhenskaya}.
The viscous nonhomogeneous incompressible Navier-Stokes system in the whole space or with non-slip boundary
conditions have been studied extensively and there is a large literature on various topics concerning this
system
such as the well-posedness in various functional spaces
\cite{H. Choe,Lions,R. Danchin4,Y. Cho} and refrences therein.

The slip boundary condition \eqref{1.5} is the special Navier-type slip
boundary condition, which was first introduced by Navier \cite{N}.
Recently,
the vanishing viscosity limits problems was established
on different physical models
with the slip boundary conditions, such as
\cite{HF,H,Iftimie,xiao1,xiao2,xiao4}.
From a mathematical point of view,
compared with the situation of Navier-Stokes equations \cite{xiao1},
the nonconstant density entering
in the momentum equations.
In order to overcoming the difficulties caused by
the coupled terms of the density and the velocity,
and balancing well on the slip boundary conditions
for the vorticity equations with density,
we considered using some methods such as
establishing some suitable boundary conditions
or initial data for the density.

In fact, the selection of the geometry domain
plays a very important role in the boundary conditions.
The theory of the uniform $H^3(\Omega)$ estimate
and the strong convergence results can be established
as the domains with flat boundary,
such as the vanishing viscosity limits problem for 3D Navier-Stokes equations
was first investigated by Xiao and Xin in \cite{xiao1}.
The results was generalized to $W^{k,p}$ spaces by
Beir$\tilde{a}$o da Veiga and Crispo in \cite{HF,H}
and extended the works to the MHD equations in \cite{xiao2}.
On the other hand, the paper \cite{xiao3} initially
obtain a weaker convergence results in general bounded domains,
where the initial vortisity is assumed to vanishing on the boundary.
The optimal estimate on the rate of convergence for
the Navier-Stokes equations was obtained
up to $W^{1,p}(\Omega)$
by Berselli and Spirito \cite{LB}.
We should point out that an elementary approach is also applied to study
the weaker convergence results in \cite{LC}.
Motivation by the ideals of strong convergence results \cite{xiao1},
our main purpose is to establish
the vanishing viscosity limit
for the nonhomogeneous Navier-Stokes equations
with the slip boundary conditions.

The vanishing viscosity limit
problem for the model \eqref{1.1}-\eqref{1.4} in the whole
space or with periodic boundary condition
 by Itoh and Tani \cite{Shigeharu,ShigeharuItoh}
and Danchin \cite{R. Danchin2}, respectively.
The first convergence result,
in sense of weak topological
under general Navier Slip boundary conditions,
was obtained by Ferreira and Planas \cite{LCF}.
However, the strong convergence process
can not be established via the direct energy method,
the main difficulty comes from the formation of boundary layer.
To overcome the main obstacle caused by the boundary layer,
we need to chose a special geometric region and a special boundary conditions.
Now we show the nonhomogeneous incompressible Euler equations
\begin{align}
\label{1.6}&\rho^{0}\partial_{t}u^{0}+\rho^{0} u^{0}\cdot\nabla u^{0}+\nabla p^{0}=0,&{\rm in}\ \Omega,\\
\label{1.7}&\partial_{t}\rho^{0}+u^{0}\cdot\nabla \rho^{0}=0,&{\rm in}\ \Omega,\\
\label{1.8}&\nabla\cdot u^{0}=0,&{\rm in}\ \Omega,\\
\label{1.9}&u^{0}(0,x)=u_0(x),\rho^{0}(0,x)=\rho_0(x),&{\rm in}\ \Omega,
\end{align}
with the slip boundary conditons
\begin{eqnarray}\label{1.10}
u^{0}\cdot n=0,\ {\rm on}\ \partial\Omega.
\end{eqnarray}
The local existence uniqueness of the strong solution of
\eqref{1.6}-\eqref{1.10} were solved
by Valli and Zajaczkowski \cite{A. VALLI}.
The reader can be also referred to \cite{H0,H00,R. Danchin3}.

In this paper,
our main aim is to investigate the vanishing
viscosity limit problem of \eqref{1.1}-\eqref{1.5}.
Our problem here is the existence of the density,
which is different from \cite{xiao1}.
To deal with it,
we add the following compatible initial data condition
\begin{equation}\label{1.11}
\nabla\rho_0\cdot n=0,\ {\rm on}\ \partial\Omega,
\end{equation}
under which,
we can get a similar result as Navier-Stokes equation \cite{xiao1}.
It should be mentioned that some technique has been
done to the density for the Boussineq equations
by Berselli and Spirito in \cite{LB11,LC}.

\begin{remark}
In \cite{LB11}, the author take the additional
boundary condition $\partial_3\rho=0$ into consideration
in the half-space.
As the boundary of domain is flat,
the initial data condition \eqref{1.11} can follow this boundary condition.
In \cite{LC},
the author used the compatible initial data condition
of the density $\nabla\rho_0=0$.
This condition can obviously apply to our problem.
Indeed,
our results also hold if we use this condition for the density.
However,
we find the weaker condition \eqref{1.11} can play the same role if the boundary
is flat.
It should be pointed out this condition fails to be true in general
if the boundary is nonflat.
\end{remark}

Now let us state our main results of our paper as follows.
\begin{theorem}\label{t1.1}
Let $\rho_0\in H^3(\Omega)$ satisfies condition \eqref{1.11},
$u_0\in W\cap H^3(\Omega)$.
Then there exists a positive time $T^*$, independent of $\nu$,
such that the problems \eqref{1.1}-\eqref{1.5} has a unique solution
$(\rho,u)$ which satisfies $\rho(x,t)>0$ for all $t\in [0,T^*]$, and
\begin{align*}
(\rho,u)\in L^\infty(0,T^*;H^3)\times L^\infty(0,T^*;H^3), \\
u\in L^2(0,T^*;H^4)\ {\rm for}\ {\rm fixed}\ \nu>0.
\end{align*}
Let  $\rho^0_0,u^0_0\in H^3(\Omega)$ satisfies
$0<\underline{\rho}\leq\rho_0^0(x)\leq\overline{\rho}<\infty$ and $div u^0_0=0$.
Then the nonhomogeneous incompressible Euler equations \eqref{1.6}-\eqref{1.10}
has a unique solution $(\rho^0,u^0)$ such that
\begin{align*}
\rho^0(x,t)>0,\ {\rm for\ all}\ x\in \Omega,\ t\in[0,T^*],\\
\rho^0,u^0\in L^\infty(0,T^*;H^3(\Omega))\times L^\infty(0,T^*;H^3).
\end{align*}
Let $(\rho,u)$ and $(\rho^0,u^0)$ be the solutions to the problems
\eqref{1.1}-\eqref{1.5} and \eqref{1.6}-\eqref{1.10}
 with initial data $\rho_0,u_0$, respectively.
Then there exists a positive $T_0$ such that $T_0<T^*$, and
\begin{eqnarray*}
\|\rho(\nu)-\rho^0\|_2^2+\|u(\nu)-u^0\|_2^2\leq C(T)\nu
\end{eqnarray*}
hold in the interval $T\in[0,T_0]$.
\end{theorem}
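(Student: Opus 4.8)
The proof naturally splits into three parts mirroring the three claims of the theorem: (i) uniform-in-$\nu$ local well-posedness for the viscous system \eqref{1.1}--\eqref{1.5} in $H^3$; (ii) local well-posedness for the limiting Euler system \eqref{1.6}--\eqref{1.10} in $H^3$; and (iii) the convergence rate $\|\rho(\nu)-\rho^0\|_2^2+\|u(\nu)-u^0\|_2^2\le C(T)\nu$ on a possibly shorter interval. The heart of the matter is a uniform a priori estimate in $H^3$ for (i); once this is in hand, (ii) follows by a standard vanishing-viscosity compactness argument (or directly from the cited work of Valli--Zajaczkowski), and (iii) follows by an energy estimate for the difference of the two solutions.

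For the uniform estimate in part (i), I would work in the space $W\cap H^3$ adapted to the slip condition \eqref{1.5}. The key algebraic fact, exploited in \cite{xiao1}, is that on a flat boundary the conditions $u\cdot n=0$ and $(\nabla\times u)\cdot\tau=0$ propagate to higher derivatives: tangential derivatives of $u$ and the normal derivative of the normal component again satisfy the slip condition, so one may integrate by parts in the vorticity formulation without boundary contributions. Differentiating \eqref{1.1} and testing appropriately, one obtains
\begin{align*}
\frac{d}{dt}\bigl(\|\sqrt{\rho}\,u\|_{H^3}^2\bigr)+\nu\|u\|_{H^4}^2\le C\bigl(\|\nabla\rho\|_{H^2},\|u\|_{H^3}\bigr)\bigl(1+\|u\|_{H^3}^2\bigr)^{k},
\end{align*}
where the density enters through commutators of $\rho$ with the Laplacian and through the transport term. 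Here the compatibility condition \eqref{1.11}, $\nabla\rho_0\cdot n=0$, is essential: it is exactly what is needed so that $\nabla\rho\cdot n=0$ is propagated by the transport equation \eqref{1.2} on the flat boundary (because $u\cdot n=0$ there), which in turn is what kills the otherwise-uncontrollable boundary term arising when one integrates by parts against $\nu\Delta u$ in the third-order energy estimate. Coupling this with the transport estimate $\frac{d}{dt}\|\rho\|_{H^3}\le C\|\nabla u\|_{H^2}\|\rho\|_{H^3}$ and the maximum principle $\underline\rho\le\rho\le\overline\rho$, a Gronwall/continuation argument yields a time $T^*$ independent of $\nu$ and the stated regularity; the $L^2(0,T^*;H^4)$ bound on $u$ comes from the dissipation term (with a constant that may blow up as $\nu\to0$, which is harmless). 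Existence is then obtained by a standard Galerkin or iteration scheme respecting the boundary condition, and uniqueness by an $L^2$-type energy estimate on the difference.

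For part (iii), set $w=u(\nu)-u^0$, $\sigma=\rho(\nu)-\rho^0$. Subtracting \eqref{1.6} from \eqref{1.1} and \eqref{1.7} from \eqref{1.2}, one gets transport-type equations for $\sigma$ and $w$ with source terms that are either (a) differences of quadratic nonlinearities, estimated by $\|(\sigma,w)\|_{L^2}$ times the uniform $H^3$ bounds, or (b) the viscous term $\nu\Delta u(\nu)$, which is $O(\nu)$ in a suitable dual norm. Testing the $w$-equation with $w$ and the $\sigma$-equation with $\sigma$ (and using $\nabla\cdot u=\nabla\cdot u^0=0$ together with the boundary conditions to discard boundary terms), one arrives at
\begin{align*}
\frac{d}{dt}\bigl(\|\sqrt{\rho}\,w\|_{L^2}^2+\|\sigma\|_{L^2}^2\bigr)+2\nu\|\nabla w\|_{L^2}^2\le C(T)\bigl(\|w\|_{L^2}^2+\|\sigma\|_{L^2}^2\bigr)+C(T)\nu,
\end{align*}
the last term coming from $\nu\int\Delta u(\nu)\cdot w\le \nu\|\nabla u(\nu)\|_{L^2}\|\nabla w\|_{L^2}+(\text{boundary})$ after the slip boundary term is handled on the flat boundary, and from the density commutators. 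Gronwall's inequality on $[0,T_0]$, $T_0\le T^*$, gives the $\nu$-rate in $L^2$; upgrading to the $H^2$-norm claimed in the theorem requires running the same difference argument at the level of second derivatives, which is legitimate because both solutions are uniformly bounded in $H^3$, and again relies on \eqref{1.11} to control the boundary terms generated by the density. I expect the main obstacle to be precisely the bookkeeping of these boundary terms in the higher-order difference estimate — showing that \eqref{1.11}, propagated by the flat-boundary transport equation, is sufficient to make all boundary integrals arising from integration by parts against $\nu\Delta u$ vanish or be absorbed — since this is where the nonconstant density genuinely departs from the homogeneous case treated in \cite{xiao1}.
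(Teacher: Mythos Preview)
Your overall architecture matches the paper's: uniform $H^3$ a priori estimates via the vorticity formulation and the propagation of $\nabla\rho\cdot n=0$ (the paper's Proposition~2.1), then existence/uniqueness, then a difference estimate for the rate. The role you assign to \eqref{1.11} is exactly right.

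There is, however, a genuine gap in your a priori estimate. The differential inequality you write,
\[
\frac{d}{dt}\|\sqrt{\rho}\,u\|_{H^3}^2+\nu\|u\|_{H^4}^2\le C\bigl(\|\nabla\rho\|_{H^2},\|u\|_{H^3}\bigr)\bigl(1+\|u\|_{H^3}^2\bigr)^{k},
\]
cannot hold as stated. When you apply $-\Delta$ to \eqref{1.1} and set $\psi=-\Delta u$, the commutator $[-\Delta,\rho]\,u_t=2\nabla\rho\cdot\nabla u_t+\Delta\rho\,u_t$ appears; at the $H^3$ level (i.e.\ after one more curl or gradient) this forces a term of size $\|u_t\|_{H^2}\sim\|\psi_t\|$ on the right-hand side. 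You cannot eliminate it using the equation, since $\rho u_t=\nu\Delta u-\cdots$ gives $\|u_t\|_{H^2}\lesssim\nu\|u\|_{H^4}+\cdots$, and the $\nu\|u\|_{H^4}$ contribution is not uniformly bounded in time. The paper closes this by \emph{also} testing the $\psi$-equation against $\psi_t$, which produces $\underline{\rho}\|\psi_t\|^2$ on the left (see \eqref{3.7}); summing with the estimate obtained by testing against $\nabla\times\psi$ (see \eqref{3.6}) absorbs the commutator and yields a closed inequality in $\|\rho\|_3$, $\|u\|_3$, $\|\omega_t\|$. Without this coupled pair of test functions, the density commutator does not close uniformly in $\nu$; this is the new ingredient relative to the homogeneous case in \cite{xiao1}, and it is missing from your plan.

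For part~(iii) your route differs slightly from the paper's: you propose an $L^2$ difference estimate first and then an upgrade to $H^2$, whereas the paper works directly at the $H^2$ level by setting $\Psi=-\Delta v$, $v=u-u^0$, and again testing the $\Psi$-equation against both $\Psi$ and $v_t$ (mirroring the a priori step) to control the same density commutator $[-\Delta,\rho]v_t$. Your two-step approach would also work, but at the $H^2$ stage you will hit the identical $v_t$ commutator, so the same coupled-test-function device is required there too; once you have it for part~(i), it transfers verbatim. Note also that the source term from viscosity in the difference equation is $\nu\Delta u^0$, not $\nu\Delta u(\nu)$, so the contribution is bounded directly by $\nu\|u^0\|_{H^4}^2$ (or $\nu\|u^0\|_{H^3}^2$ at the $L^2$ level) with no boundary manipulation needed beyond the slip structure already established.
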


The rest of the paper is organized as follows:
Section 2, we introduce some notions of function
spaces and some basic results.
Section 3, we present the basic a priori estimates for
the existence theory to the solutions.
The last section are devoted to provide the detailed proof
for the rate of convergence on system \eqref{1.1}
-\eqref{1.5} converge to
\eqref{1.6}-\eqref{1.10}.

\section{Preliminaries}
\noindent Throughout this paper,
$\Omega\subset R^3$  denotes a class of bounded smooth domain with flat boundary,
for example, the cubic domain
$\partial \Omega=\{(x_1,x_2,x_3);x_3=0,1\}\cap \overline{\Omega}$.
Denote the inner product by $(\cdot)$,
the standard Sobolev spaces by $H^s(\Omega)$
with the norm
$\|\cdot\|_{L^2(\Omega)}=\|\cdot\|$ and $\|\cdot\|_{H^s(\Omega)}=\|\cdot\|_s$,
and by
$\nabla\times\phi=\varepsilon_{ijk}\partial_j\phi_k$, $u\cdot\nabla v=\sum_{i,j=1}^3u_i\partial_iv_j$.
For notational convenience,
$\Omega$ may be omitted when we write the spaces without confusion.
We also denote [A,B]=AB-BA, the commutator between
two operators A and B.
Set
\[   X = \{u\in L^{2}(\Omega); \nabla\cdot u = 0, \ u\cdot n = 0\},\]
be the Hilbert space with the $L^2$ inner product, and let
\[   V = H^1 (\Omega)\cap X\subset X ,\]
\[   W = \{u\in V\cap H^2 (\Omega); \nabla\times u\cdot \tau = 0\ {\rm on}\ \partial\Omega\}\subset X.\]

First, we define the nonlinear terms by
$B(\rho,u)=\rho u\cdot\nabla u+\nabla p,$
where $\nabla p$ is determined by
\begin{align*}
&{\rm div}(\rho^{-1}\nabla p)=-{\rm div}(u\cdot\nabla u), &{\rm in}\ \Omega, \\
&\nabla p\cdot n = -\rho u\cdot\nabla u\cdot n,      &{\rm on}\ \partial\Omega.
\end{align*}
Second, taking the curl of \eqref{1.1}, it follows that the vorticity equations
\begin{align}
\rho(\omega_t+u\cdot\nabla \omega-\omega\cdot\nabla u)+
\nabla\rho\times(\partial_t u+u\cdot\nabla u)-\nu\Delta\omega=0
,{\rm in}\ \Omega.
\end{align}
In order to obtain the $H^3$ strong solution and strong convergence result,
we should control the boundary terms appearing in the Green formula for the integral
$\int_{\Omega}\Delta\psi\psi dx$, where $\psi=-\Delta u$.
One main mathematical technique
to balance well the boundary conditions $\Delta\omega\times n=0$
is needed due to the fixed slip boundary conditions.
Therefore, it is necessary to ensure that
the following two conditions
$\rho(\omega_t+u\cdot\nabla \omega-\omega\cdot\nabla u)\times n=0$
and $\nabla\rho\times(\partial_t u+u\cdot\nabla u)\times n=0$ hold on the boundary.
For the later condition,
it is needed to add an additional boundary condition
for the density under the slip boundary condition.

\begin{proposition}
Let $\rho$ be the solution of the transport equation \eqref{1.2}
and  $\nabla\rho_0\cdot n=0$, for any fixed $t>0$.
Then $\nabla\rho\cdot n=0$.
\end{proposition}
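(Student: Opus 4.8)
The plan is to reduce the boundary condition to a scalar statement about $\partial_3\rho$, differentiate the transport equation \eqref{1.2} in the normal direction, and then propagate the vanishing of the trace along the boundary using that $\partial\Omega$ is characteristic for $u$. Since $\Omega$ has flat boundary I may work on one flat component $\Gamma$ of $\partial\Omega$, say $\Gamma\subset\{x_3=0\}$, where $n=\pm e_3$ and hence $\nabla\rho\cdot n=\pm\partial_3\rho$; so it suffices to show $\partial_3\rho=0$ on $\Gamma$ for all $t$, the component in $\{x_3=1\}$ being identical.

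Set $q=\partial_3\rho$. Applying $\partial_3$ to \eqref{1.2} gives
\begin{equation*}
\partial_t q+u\cdot\nabla q=-\,\partial_3 u\cdot\nabla\rho\qquad\text{in }\Omega .
\end{equation*}
Next I would collect the boundary information. From $u\cdot n=0$ one has $u_3=0$ on $\Gamma$, hence the tangential derivatives $\partial_1 u_3=\partial_2 u_3=0$ on $\Gamma$; combining this with $\nabla\times u\cdot\tau=0$ (taking $\tau=e_1$ and $\tau=e_2$, i.e. $\partial_3 u_2-\partial_2 u_3=0$ and $\partial_3 u_1-\partial_1 u_3=0$) yields $\partial_3 u_1=\partial_3 u_2=0$ on $\Gamma$. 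Restricting the $q$-equation to $\Gamma$ and using $u_3=0$ in $u\cdot\nabla q$ together with $\partial_3 u_1=\partial_3 u_2=0$ in the source term, we get the closed equation
\begin{equation*}
\partial_t q+u_1\partial_1 q+u_2\partial_2 q=-(\partial_3 u_3)\,q\qquad\text{on }\Gamma ,
\end{equation*}
whose zeroth-order coefficient is bounded (as $u\in L^\infty(0,T;H^3)$) and whose initial datum $q|_{t=0}=\partial_3\rho_0$ vanishes on $\Gamma$ by \eqref{1.11}.

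It remains to conclude $q\equiv 0$ on $\Gamma$. The cleanest route is by characteristics: the flow $\dot X=u(X,t)$ preserves $\Gamma$ because $u_3=0$ there, and along it $q$ solves the linear ODE $\dot q=-(\partial_3 u_3)\,q$, so $q(X(t),t)=q(X(0),0)\exp\!\big(-\int_0^t\partial_3 u_3\,ds\big)=0$; since $X(t,\cdot)$ is a diffeomorphism of $\Gamma$ this gives $q(\cdot,t)\equiv 0$ on $\Gamma$ for every $t$. (Equivalently: multiply the boundary equation by $q$, integrate over $\Gamma$, integrate by parts in the tangential variables and use $\partial_1 u_1+\partial_2 u_2=-\partial_3 u_3$ from \eqref{1.3} to obtain $\frac{d}{dt}\|q\|_{L^2(\Gamma)}^2\le C\|q\|_{L^2(\Gamma)}^2$, then apply Gr\"onwall.) Hence $\nabla\rho\cdot n=\pm q=0$ on $\partial\Omega$.

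The main obstacle — and the reason the statement needs the full slip condition \eqref{1.5} and the flatness of $\partial\Omega$ — is precisely the identity $\partial_3 u_1=\partial_3 u_2=0$ on $\Gamma$: it is what eliminates the uncontrolled tangential-gradient terms $\partial_3 u_1\,\partial_1\rho+\partial_3 u_2\,\partial_2\rho$ from the source and turns the equation for $q$ on $\Gamma$ into one with no forcing. Without the vorticity boundary condition, or on a curved boundary (where $\partial_j u_3$ need not vanish tangentially), this step fails and $\nabla\rho\cdot n$ would not propagate, in accordance with the remark in the paper.
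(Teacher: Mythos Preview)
Your proof is correct and follows essentially the same approach as the paper: differentiate the transport equation in the normal direction, use the slip boundary conditions \eqref{1.5} on the flat boundary to reduce the source term $\partial_3 u\cdot\nabla\rho$ to $(\partial_3 u_3)\,\partial_3\rho$, and then propagate the vanishing initial trace via the resulting linear homogeneous ODE along characteristics. Your write-up is in fact more explicit than the paper's (you spell out why $\partial_3 u_1=\partial_3 u_2=0$ on $\Gamma$ and offer the Gr\"onwall alternative), but the underlying argument is the same.
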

\begin{proof}
Taking the partial $x_3$ derivative to the \eqref{1.2},
it follows that
\begin{eqnarray}\label{2.1}
\partial_t\partial_3\rho+u\cdot\nabla\partial_3\rho+\partial_3u\cdot\nabla\rho=0.
\end{eqnarray}
It follows from the boundary conditions \eqref{1.5} that $\partial_3u_j=0$, $j=1,2$.
Thus $\partial_3u\cdot\nabla\rho=\partial_3u_3\partial_3\rho$ hold,
therefore,
\begin{eqnarray*}
\frac{D}{D_t}(\partial_3\rho)+\partial_3u_3\partial_3\rho=0.
\end{eqnarray*}
Due to the initial data $\nabla\rho_0\cdot n=0$, for any $t>0$,
it follows that $\rho\cdot n=0$
from the ordinary differential equation.
\end{proof}

\begin{lemma}
Let $u\in C^{\infty}(\overline{\Omega})\cap W$,
u be the solution of the nonhomogeneous
incompressible Navier-stokes equations \eqref{1.1}-\eqref{1.5}.
Then the condition $(\partial_t u+u\cdot\nabla u)\cdot n=0$ hold.
\end{lemma}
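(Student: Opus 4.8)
\emph{Proof proposal.} The plan is to use the flatness of $\partial\Omega$ to turn the vector identity $(\partial_t u+u\cdot\nabla u)\cdot n=0$ into a scalar statement about the normal component $u_3$ of $u$. On each flat face of the boundary — $\{x_3=0\}\cap\overline\Omega$ or $\{x_3=1\}\cap\overline\Omega$ — the outward unit normal is the constant vector $n=(0,0,\mp1)$, so that $(\partial_t u+u\cdot\nabla u)\cdot n=\mp(\partial_t u_3+u\cdot\nabla u_3)$ there, and it suffices to prove $\partial_t u_3+u\cdot\nabla u_3=0$ on $\partial\Omega$.

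First I would note that, since $u$ solves the initial boundary value problem, the slip condition $u\cdot n=0$ holds on $\partial\Omega$ for every $t$; on the flat faces this reads $u_3=0$, and differentiating in $t$ gives $\partial_t u_3=0$ on $\partial\Omega$. For the convective term, the fact that $u_3$ vanishes identically along each flat face means its tangential derivatives vanish there, i.e.\ $\partial_1 u_3=\partial_2 u_3=0$ on $\partial\Omega$. Expanding $u\cdot\nabla u_3=u_1\partial_1u_3+u_2\partial_2u_3+u_3\partial_3u_3$, the first two terms vanish by this observation and the third vanishes because $u_3=0$ on $\partial\Omega$. Hence $u\cdot\nabla u_3=0$ on $\partial\Omega$, and combined with $\partial_t u_3=0$ this gives the claim.

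The delicate point — and the reason the flatness hypothesis is essential — lies entirely in the convective term. In a general domain one would write $(u\cdot\nabla u)\cdot n=u\cdot\nabla(u\cdot n)-u\cdot\big((u\cdot\nabla)\tilde n\big)$ for a smooth extension $\tilde n$ of the normal; the first term still vanishes on $\partial\Omega$ because $u$ is tangential there and $u\cdot n$ vanishes along $\partial\Omega$, but the second term is essentially the second fundamental form applied to the tangential vector $u$ and need not vanish. Flatness kills exactly this curvature term, which is why I would carry out the argument in the explicit coordinates above rather than attempt a coordinate-free version. An alternative route, which I would only mention, is to dot the momentum equation \eqref{1.1} with $n$ and use the boundary condition $\nabla p\cdot n=-\rho\,u\cdot\nabla u\cdot n$ built into the definition of $B(\rho,u)$ together with $\Delta u\cdot n=0$ on the flat boundary — itself a consequence of $\nabla\cdot u=0$ and $\nabla\times u\cdot\tau=0$ — and then divide by $\rho>0$; this yields the same conclusion but is longer and ultimately relies on the same flatness.
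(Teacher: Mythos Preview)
The paper states this lemma without proof, so there is nothing to compare against directly; your argument is correct and is exactly the elementary verification the authors evidently have in mind. The boundary identities you invoke on the flat faces --- $u_3=0$, hence $\partial_t u_3=0$ and $\partial_1 u_3=\partial_2 u_3=0$ --- are precisely the ones the paper uses in the neighboring Proposition~\ref{p2.2}, so your approach is fully in the spirit of the surrounding computations.
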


Next, we only to show the boundary condition
$\rho(\omega_t+u\cdot\nabla \omega-\omega\cdot\nabla u)\times n=0$.

\begin{proposition}\label{p2.2}
Let $\rho\in C^{\infty}(\overline{\Omega})$ and
$u\in C^{\infty}(\overline{\Omega})\cap W$,
$\rho,u$ be the solution of the nonhomogeneous
incompressible Navier-stokes equations \eqref{1.1}-\eqref{1.5}.
Then $\rho(\omega_t+u\cdot\nabla \omega-\omega\cdot\nabla u)\times n=0$.
\end{proposition}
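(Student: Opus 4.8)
The plan is to verify the identity componentwise, using crucially that $\partial\Omega$ is flat. Since $\partial\Omega\subset\{x_3=0\}\cup\{x_3=1\}$, the unit outward normal is $n=\pm e_3$ there, every unit tangential vector lies in $\mathrm{span}\{e_1,e_2\}$, and for any vector field $a=(a_1,a_2,a_3)$ one has $a\times n=\pm(a_2,-a_1,0)$. As $\rho$ is smooth and positive, the assertion $\rho(\omega_t+u\cdot\nabla\omega-\omega\cdot\nabla u)\times n=0$ on $\partial\Omega$ is therefore equivalent to the vanishing on $\partial\Omega$ of the first two components of $V:=\omega_t+u\cdot\nabla\omega-\omega\cdot\nabla u$.

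First I would collect the boundary relations that \eqref{1.5} imposes on a flat piece of $\partial\Omega$: $u_3=0$ and $\omega_1=\omega_2=0$, and these hold for all $t$. Consequently, differentiating along the boundary, $\partial_1\omega_j=\partial_2\omega_j=0$ on $\partial\Omega$ for $j=1,2$, and differentiating in time, $\partial_t\omega_j=0$ on $\partial\Omega$ for $j=1,2$. In addition, exactly as in the proof of the preceding Proposition, from $\omega_1=\partial_2u_3-\partial_3u_2$, $\omega_2=\partial_3u_1-\partial_1u_3$ and $u_3\equiv0$ on the flat boundary (so that $\partial_1u_3=\partial_2u_3=0$ there) one gets $\partial_3u_1=\partial_3u_2=0$ on $\partial\Omega$.

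Next I would evaluate the three terms of $V_j$, for $j\in\{1,2\}$, on $\partial\Omega$. The term $\partial_t\omega_j$ vanishes by the above. The convection term $(u\cdot\nabla\omega)_j=u_1\partial_1\omega_j+u_2\partial_2\omega_j+u_3\partial_3\omega_j$ vanishes because $\partial_1\omega_j=\partial_2\omega_j=0$ and $u_3=0$. The stretching term $(\omega\cdot\nabla u)_j=\omega_1\partial_1u_j+\omega_2\partial_2u_j+\omega_3\partial_3u_j$ vanishes because $\omega_1=\omega_2=0$ and $\partial_3u_j=0$ (for $j=1,2$). Hence $V_1=V_2=0$ on $\partial\Omega$, so $V\times n=0$ and a fortiori $\rho\,V\times n=0$, which is the claim.

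There is no genuine analytic difficulty here; the argument is a matter of bookkeeping which derivatives are tangential and which quantities vanish on the boundary, and the one essential ingredient is the flatness of $\partial\Omega$, which makes $n$ constant and identifies the tangential derivatives with $\partial_1,\partial_2$, so that vanishing of $\omega_1,\omega_2$ (and of $u_3$) on $\partial\Omega$ transfers at once to their tangential derivatives. One could alternatively substitute the vorticity equation to rewrite $\rho\,V=\nu\Delta\omega-\nabla\rho\times(\partial_tu+u\cdot\nabla u)$, but this merely replaces the present statement by the pair $\Delta\omega\times n=0$ and $\nabla\rho\times(\partial_tu+u\cdot\nabla u)\times n=0$, the first of which is precisely what one wants to deduce \emph{from} this proposition together with the density identity; so the direct componentwise computation above is the cleaner route.
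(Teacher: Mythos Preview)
Your proof is correct and follows essentially the same approach as the paper's: a componentwise verification on the flat boundary using $u_3=0$, $\omega_1=\omega_2=0$, $\partial_3u_1=\partial_3u_2=0$ and their tangential derivatives. The paper expands $\omega_1=\partial_2u_3-\partial_3u_2$ and invokes $\partial_i\partial_3u_j=0$ where you instead use $\partial_i\omega_j=0$ directly, but these are equivalent bookkeeping choices.
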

\begin{proof} Obviously,
we have $\omega_t\cdot\tau=0$, it remains to show
\begin{equation*}
  (\rho(u\cdot\nabla \omega-\omega\cdot\nabla u))_j = 0,\ {\rm j=1,\,2\ on}\ \partial\Omega.
 \end{equation*}
It follows from the conditions \eqref{1.5} that
$\partial_{i,3}u_{j} = 0,\ \mbox{for}\ i,j=1,2 \ \mbox{on} \ \partial\Omega$.
By the definition of the curl,
we have
\begin{align*}
(\rho u\cdot\nabla\omega-\rho\omega\cdot\nabla u)_1
 &=\sum_{i=1}^3(\rho u_i\partial_i \omega_1-\rho \omega_i\partial_i u_1)\\
 &=\sum_{i=1}^2(\rho u_i\partial_i (\partial_2u_3-\partial_3u_2)-\rho \omega_i\partial_i u_1)+
 (\rho u_3\partial_3 \omega_1-\rho\omega_3\partial_3 u_1)\\
 &=0.
\end{align*}
Similarly,
\begin{align*}
(\rho u\cdot\nabla\omega-\rho\omega\cdot\nabla u)_2
 =\sum_{i=1}^3(\rho u_i\partial_i \omega_2-\rho u_i\omega_i\partial_i u_2)
 =0.
\end{align*}
This completes the proof of Proposition \ref{p2.2}.
\end{proof}

Assuming that $\phi(t),\psi(t),f(t)$
are smooth non-negative functiond defined for all $t\geq 0$,
then we show the following differential inequality.
\begin{lemma}[see \cite{R.Salvi}]\label{l2}
Suppose $\phi(0)=\phi_0$ and $\frac{d\phi(t)}{dt}+\psi(t)\leq g(\phi(t))+f(t)$ for $t\geq 0$,
where g is a non-negative Lipschitz continuous function defined for $\phi\geq 0$.
Then $\phi(t)\leq F(t;\phi_0)$ for $t\in [0,T(\phi_0))$ where $F(\cdot;\phi_0)$ is the solution of
the initial value problem $\frac{dF(t)}{dt}=g(F(t))+f(t)$; $F(0)=\phi_0$ and $[0,T(\phi_0))$
is the largest interval to which it can be continued.
Also,if g is nondecreasing, then
\[\int_0^t\psi(\tau)d\tau\leq \widetilde{F}(t;\phi_0)\]
with
\[\widetilde{F}(t;\phi_0)=\phi_0+\int_0^t[g(F(\tau;\phi_0))+f(\tau)]d\tau.\]
\end{lemma}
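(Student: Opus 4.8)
The plan is to recognize this as a standard ODE comparison (Gronwall-type) result and to prove it by a perturbation argument combined with continuous dependence on data. First I would discard the non-negative term $\psi(t)$ on the left-hand side, reducing the hypothesis to the scalar differential inequality $\frac{d\phi(t)}{dt}\leq g(\phi(t))+f(t)$. The heart of the matter is then to compare the sub-solution $\phi$ with the exact solution $F$ of the initial value problem $\frac{dF}{dt}=g(F)+f$, $F(0)=\phi_0$; the Lipschitz hypothesis on $g$ guarantees that $F$ is unique and extends to a maximal interval $[0,T(\phi_0))$, and also supplies the continuous-dependence estimates needed below.

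Second, to obtain a strict ordering I would introduce, for each $\epsilon>0$, the perturbed problem $\frac{dF_\epsilon}{dt}=g(F_\epsilon)+f+\epsilon$ with $F_\epsilon(0)=\phi_0+\epsilon$. Fixing any $T'<T(\phi_0)$, continuous dependence ensures that $F_\epsilon$ exists on $[0,T']$ for all sufficiently small $\epsilon$ and converges to $F$ uniformly there as $\epsilon\to0$. I claim $\phi(t)<F_\epsilon(t)$ throughout $[0,T']$. This holds at $t=0$ since $\phi_0<\phi_0+\epsilon$; if it failed, let $t_1$ be the first time with $\phi(t_1)=F_\epsilon(t_1)$, so that $F_\epsilon-\phi$ is positive on $[0,t_1)$ and vanishes at $t_1$, whence $(F_\epsilon-\phi)'(t_1)\leq0$. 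But
\[
\frac{d\phi}{dt}(t_1)\leq g(\phi(t_1))+f(t_1)=g(F_\epsilon(t_1))+f(t_1)=\frac{dF_\epsilon}{dt}(t_1)-\epsilon<\frac{dF_\epsilon}{dt}(t_1),
\]
i.e. $(F_\epsilon-\phi)'(t_1)>0$, a contradiction. Letting $\epsilon\to0$ yields $\phi(t)\leq F(t)$ on $[0,T']$, and since $T'<T(\phi_0)$ was arbitrary, on all of $[0,T(\phi_0))$.

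Finally, for the integral estimate I would exploit the monotonicity of $g$. Integrating the full hypothesis over $[0,t]$ gives
\[
\phi(t)-\phi_0+\int_0^t\psi(\tau)\,d\tau\leq\int_0^t\big[g(\phi(\tau))+f(\tau)\big]\,d\tau.
\]
Since $g$ is nondecreasing and $\phi\leq F$, we have $g(\phi(\tau))\leq g(F(\tau;\phi_0))$, and since $\phi(t)\geq0$ the term $-\phi(t)$ may be dropped; combining these bounds produces
\[
\int_0^t\psi(\tau)\,d\tau\leq\phi_0+\int_0^t\big[g(F(\tau;\phi_0))+f(\tau)\big]\,d\tau=\widetilde{F}(t;\phi_0),
\]
which is exactly the asserted inequality.

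The step I expect to be the main obstacle is the limit passage $\epsilon\to0$: the strict comparison itself is a one-line maximum-principle argument, but justifying that $F_\epsilon\to F$ and that both solutions remain defined on a common subinterval of $[0,T(\phi_0))$ rests entirely on the Lipschitz continuity of $g$ through the standard continuous-dependence theory for ODEs. I would also take care that the first-crossing argument invokes only the one-sided derivative inequality at $t_1$, so the smoothness assumed on $\phi$, $\psi$, $f$ is more than sufficient.
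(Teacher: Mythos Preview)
Your argument is correct and is precisely the standard perturbation/comparison proof of this Gronwall-type lemma. Note, however, that the paper does not supply its own proof of this statement: the lemma is quoted from \cite{R.Salvi} and used as a black box, so there is no in-paper argument to compare against. Your treatment would serve perfectly well as the missing justification.
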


\section{A Priori Estimates}
In this section we establish an appropriate a priori estimate
for local well-posedness of the strong solutions for
the initial boundary value problems \eqref{1.1}-\eqref{1.5}.
For this purpose, we next consider the following Proposition.

\begin{proposition}\label{p3.1}
Let $\rho_0\in H^3(\Omega)$ satisfies \eqref{1.11}, $u_0\in W\cap H^3(\Omega)$,
$(\rho,u)$ be a solution of problem \eqref{1.1}-\eqref{1.5}.
Then there is a positive time $T^*$ depending on $\|(\rho_0,u_0)\|_{H^3}$
but is dependent of $\nu$,
such that $0<\underline{\rho}\leq\rho_0(x)\leq \overline{\rho}<\infty$ and
\begin{align*}
\|\rho(t)\|_3+\|u(t)\|_3+\int_0^t\|u_t(s)\|_2^2ds
+\nu\int_0^t\|u(s)\|_4^2ds\leq C\quad \mbox{for t}\in [0,T^*],
\end{align*}
where C is a constant independent of $\nu$.
\end{proposition}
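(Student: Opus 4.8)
The plan is to derive a closed differential inequality for the quantity $\Phi(t) = \|\rho(t)\|_3^2 + \|u(t)\|_3^2$ together with the dissipation terms, and then invoke Lemma \ref{l2} to obtain a uniform-in-$\nu$ existence time $T^*$. First I would handle the density: since \eqref{1.2} is a transport equation with divergence-free $u$, the bounds $\underline{\rho} \le \rho(x,t) \le \overline{\rho}$ are preserved for all time, and differentiating \eqref{1.2} and applying commutator estimates in $H^3$ gives $\frac{d}{dt}\|\rho\|_3^2 \lesssim \|\nabla u\|_{H^2}\|\rho\|_3^2 \lesssim (\|u\|_3+1)\|\rho\|_3^2$ by Sobolev embedding and the algebra property of $H^2(\Omega)$. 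Crucially, Proposition 2.1 ensures $\nabla\rho\cdot n = 0$ is propagated, which is what makes the boundary terms in the velocity estimate vanish.

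For the velocity, the core of the argument is an $H^3$ energy estimate obtained by testing a suitably differentiated form of \eqref{1.1} against $u$, $\Delta^2 u$ or equivalently working with $\psi = -\Delta u$ as the excerpt suggests. The standard route is: (i) the $L^2$ energy estimate $\frac{d}{dt}\int \rho|u|^2 + \nu\|\nabla u\|^2 \lesssim$ (lower order), using \eqref{1.2} to control $\partial_t\rho$; (ii) the vorticity estimate at the $H^2$ level, applying $\Delta$ or tangential/normal derivatives to the vorticity equation displayed in Section 2 and pairing against $\Delta\omega$. Here the boundary condition $\Delta\omega \times n = 0$ is needed so that the integration-by-parts boundary term $\int_{\partial\Omega} (\partial_n \Delta u)\cdot\Delta u$ either vanishes or is controlled; Propositions \ref{p2.2} and the Lemma on $(\partial_t u + u\cdot\nabla u)\cdot n = 0$ are exactly what guarantee the two pieces $\rho(\omega_t + u\cdot\nabla\omega - \omega\cdot\nabla u)\times n = 0$ and $\nabla\rho\times(\partial_t u + u\cdot\nabla u)\times n = 0$, so the boundary integrals produced by the viscous term vanish. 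This yields $\frac{d}{dt}\|u\|_3^2 + \nu\|u\|_4^2 \lesssim P(\|\rho\|_3, \|u\|_3)$ for some polynomial $P$, where the nonconstant density enters only through the coupled terms $\rho u\cdot\nabla u$ and $\nabla\rho \times (\partial_t u + u\cdot\nabla u)$, both estimated by the $H^2$-algebra property once $\|u_t\|_2$ is controlled.

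To close the loop I would control $\|u_t\|_2$ by reading it off the momentum equation \eqref{1.1}: $\rho u_t = \nu\Delta u - \rho u\cdot\nabla u - \nabla p$, so $\|u_t\|_2 \lesssim \underline{\rho}^{-1}(\nu\|u\|_4 + \|\rho\|_3\|u\|_3^2 + \|\nabla p\|_2)$, and $\|\nabla p\|_2$ is bounded via elliptic regularity for the pressure equation $\mathrm{div}(\rho^{-1}\nabla p) = -\mathrm{div}(u\cdot\nabla u)$ with Neumann data $\nabla p\cdot n = -\rho u\cdot\nabla u\cdot n$, using the $H^3$ bound on $\rho^{-1}$ (which follows from the lower bound $\underline{\rho}>0$ and the chain rule). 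Integrating the viscous term carefully, one absorbs $\nu\|u\|_4^2$ on the left, so $\int_0^t\|u_t\|_2^2\,ds$ picks up a $\nu\int_0^t\|u\|_4^2$ contribution that is itself already bounded. Combining everything gives $\frac{d}{dt}\Phi + \nu\|u\|_4^2 + \|u_t\|_2^2 \le C(\Phi + \Phi^k)$ for an integer $k$, with $C$ independent of $\nu$; applying Lemma \ref{l2} with $g$ the nondecreasing polynomial on the right-hand side produces the uniform time $T^*$ and the claimed bound, including the time-integrated dissipation estimates via the $\widetilde F$ part of Lemma \ref{l2}.

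The main obstacle I expect is the treatment of the boundary terms in the $H^3$ velocity estimate and the careful bookkeeping of the density-velocity coupling: one must verify that every boundary contribution arising from repeated integration by parts against $\Delta^2 u$ (or $\Delta\omega$) is killed by the established boundary conditions — in particular that $\nabla\rho\times(\partial_t u + u\cdot\nabla u)\times n = 0$, which is where the compatibility condition \eqref{1.11} and its propagation (Proposition 2.1) together with the flatness of $\partial\Omega$ are essential — and simultaneously that the term $\nabla\rho\times(\partial_t u + u\cdot\nabla u)$ in the vorticity equation can be estimated in $H^2$ without losing a derivative, which is exactly why $\|u_t\|_2$ must be carried alongside as part of the a priori quantity rather than being a mere byproduct.
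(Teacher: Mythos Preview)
Your proposal follows the same overall architecture as the paper: estimate $\|\rho\|_3$ via the transport equation, work with $\psi=-\Delta u$ for the velocity, use Propositions~2.1--2.2 to kill boundary terms, and close via Lemma~\ref{l2}. The one substantive difference is in how you obtain $\|u_t\|_2$ (equivalently $\|\psi_t\|$).

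The paper does \emph{not} read $u_t$ off the momentum equation through a pressure estimate. Instead it derives the equation \eqref{3.3} for $\psi$ and performs two pairings: first against $-\Delta\psi$ to get
\[
\tfrac{d}{dt}\|\sqrt{\rho}\,\nabla\times\psi\|^2+2\nu\|\Delta\psi\|^2\le C(\|\rho\|_3^4+\|u\|_3^4)+\underline{\rho}\,\|\psi_t\|^2,
\]
and then against $\psi_t$, which produces $\int\rho|\psi_t|^2$ directly on the left at the price of a harmless $\nu\frac{d}{dt}\|\nabla\times\psi\|^2$ term (folded into the energy). Adding the two inequalities cancels the $\|\psi_t\|^2$ contribution and closes without ever invoking elliptic regularity for the pressure Neumann problem.

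Your route---substituting $u_t=\rho^{-1}(\nu\Delta u-\rho u\cdot\nabla u-\nabla p)$ and bounding $\|\nabla p\|_2$ via the variable-coefficient elliptic problem---is legitimate, but it costs you (i) the elliptic regularity estimate for $\mathrm{div}(\rho^{-1}\nabla p)$ with $\rho^{-1}\in H^3$, and (ii) a small-$\nu$ absorption step, since $\|u_t\|_2^2$ then carries a $\nu^2\|u\|_4^2$ term that must be eaten by $\nu\|u\|_4^2$ on the left (harmless for $\nu\le\nu_0$, but it should be said). The paper's $\psi_t$-pairing avoids both of these and is the cleaner bookkeeping for the uniform-in-$\nu$ statement; otherwise your plan is sound and matches the paper's proof.
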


\begin{proof}
First, taking the equations
$\partial_t\rho+u\cdot\nabla \rho=0$ into consideration
and applying the differential operator $D^{3}$ to the transport equation,
it follows that
\begin{align*}
\partial_t(D^{3}\rho)+u\cdot\nabla D^{3}\rho+\sum_{0\leq\alpha_2\leq 2 \atop \alpha_1+\alpha_2=3
}D^{\alpha_1} u\cdot\nabla D^{\alpha_2}\rho=0.
\end{align*}
Taking the inner product with $D^{3}\rho$ and using the u is divergence free,
we have
\begin{align}\label{3.1}
\frac{1}{2}\frac{d}{dt}\|\rho(t)\|_3^2&=-(\sum_{0\leq\alpha_2\leq 2 \atop \alpha_1+\alpha_2=3 }D^{\alpha_1}
u\cdot\nabla D^{\alpha_2}\rho,D^{3}\rho)\nonumber\\
&\leq C\| u(t)\|_{3}\|\rho(t)\|_3^2.
\end{align}
We take the curl to the equation \eqref{1.1},
note that $\omega=\nabla\times u$,
it follows the vorticity equation of the nonhomogeneous Navier Stokes equations
\begin{align}\label{3.2}
\rho(\omega_t+u\cdot\nabla \omega-\omega\cdot\nabla u)+
\nabla\rho\times(\partial_t u+u\cdot\nabla u)-\nu\Delta\omega=0
,\ {\rm in}\ \Omega.
\end{align}
If we denote $\psi=-\Delta u$,
it follows from \eqref{3.2} that
\begin{align}\label{3.3}
\rho\psi_t-\nu\Delta\psi+\rho u\cdot\nabla \psi
=\nabla\rho\cdot\nabla u_t+\Delta\rho u_t-[-\Delta,\rho u\cdot\nabla]u,\ {\rm in}\ \Omega,
\end{align}
with the boundary condition
\begin{eqnarray}\label{3.4}
\psi\cdot n=0,(\nabla\times\psi)\cdot\tau=0,\ {\rm on}\ \Omega.
\end{eqnarray}
Then, taking the inner product \eqref{3.3} with $\psi$,
it follows that
\begin{align}\label{3.5}
&\frac{1}{2}\frac{d}{dt}\|\sqrt{\rho}\nabla\times\psi\|^2+
\nu\|\Delta\psi\|^2\nonumber\\
=&\int_{\Omega}-[-\Delta,\rho]u^{(m)}_t\Delta\psi dx+
\int_{\Omega}-[-\Delta,\rho u\cdot\nabla]u\Delta\psi dx
\nonumber\\
=&R_1+R_2.
\end{align}
To get the desired estimate, we have to estimate each term
on the right-hand side of \eqref{3.5}.
Due to the boundary conditin, it follows that
\begin{align*}
|R_1|&=|\int_{\Omega}\nabla\times([-\Delta,\rho]u_t)\nabla\times\psi dx|\\
&=|\int_{\Omega}\nabla\times(\partial_{ii}\rho u_t
+\partial_i\rho\partial_iu_t)\nabla\times\psi dx|\\
&\leq C(\|\rho\|_3\|u_t\|_{L^\infty}\|\nabla\times\psi\|
+\|\Delta\rho\|_{L^6}\|\omega_t\|_{L^3}\|\nabla\times\psi\|\\
&\quad+\|\nabla\rho\|_{L^\infty}\|\Delta u_t\|\|\nabla\times\psi\|)\\
&\leq C(\|\rho\|_3\|\psi_t\|\|\nabla\times\psi\|)\\
&\leq C(\|\rho\|_3^4+\|u\|_3^4)+\frac{1}{2}\underline{\rho}\|\psi_t\|^2,
\end{align*}
and
\begin{align*}
|R_2|&=|\int_{\Omega}\nabla\times([-\Delta,\rho u\cdot\nabla]u\nabla\times\psi dx|\\
&=|\int_{\Omega}\nabla\times(\partial_{ii}\rho u\cdot\nabla u
+\rho\partial_{ii} u\cdot\nabla u
+\partial_i\rho\partial_iu\cdot\nabla u\\
&\quad+\partial_i\rho u\cdot\nabla \partial_iu^{(m)}+
\rho\partial_iu\cdot\nabla\partial_iu)\nabla\times\psi dx|\\
&\leq C(\|\rho\|_3\|u\|_{L^\infty}\|\nabla u\|_{L^\infty}\|\nabla\times\psi\|
+\|\Delta\rho\|\|\nabla u\|_{L^\infty}^2\|\nabla\times\psi\|\\
&\quad+(\|\rho\|_{L^\infty}+\|\nabla\rho\|_{L^\infty})
(\|u\|_{L^\infty}+\|\nabla u\|_{L^\infty})
(\|\psi\|+\|\nabla\times\psi\|)\|\nabla\times\psi\|\\
&\leq C\|\rho\|_3\|\nabla\times\psi\|^3
\leq C(\|\rho\|_3^4+\|u\|_3^4),
\end{align*}
together with \eqref{3.5}, we have
\begin{align}\label{3.6}
&\frac{1}{2}\frac{d}{dt}\|\sqrt{\rho}\nabla\times\psi\|^2+
\nu\|\Delta\psi\|^2
\leq C(\|\rho\|_3^4+\|u\|_3^4)+\frac{1}{2}\underline{\rho}\|\psi_t\|^2.
\end{align}
The next, we estimate the norm of $\|\psi_t\|^2$ in \eqref{3.6},
taking the inner product \eqref{3.3} with $\psi_t$,
it follows that
\begin{align*}
&\int_{\Omega}\rho|\psi_t|^2dx+\frac{\nu}{2}\frac{d}{dt}\|\nabla\times\psi\|^2\nonumber\\
=&\int_{\Omega}-[-\Delta,\rho^{(m)}]u_t\psi_tdx
+\int_{\Omega}\Delta(\rho u\cdot\nabla u)\psi_tdx\\
=&J_1+J_2,
\end{align*}
where
\begin{align*}
|J_1|&=|\int_{\Omega}(\partial_{ii}\rho u_t
+\partial_i\rho\partial_iu_t)\psi_tdx|\\
&\leq C(\|\Delta\rho\|_{L^6}\|u_t\|_{L^3}\|\psi_t\|
+\|\partial_i\rho\|_{L^\infty}\|\partial_iu_t\|_{L^3}\|\psi_t\|)\\
&\leq C(\|\rho\|_3\|\omega_t\|\|\psi_t\|
+\|\rho\|_3\|\omega_t\|^{\frac{1}{2}}\|\psi_t\|^{\frac{3}{2}})\\
&\leq C(\|\rho\|_3^4+\|\rho\|_3^8
+\|\omega_t\|^4)+\frac{1}{4}\underline{\rho}\|\psi_t\|^2,
\end{align*}
and
\begin{align*}
|J_2|&=|\int_{\Omega}(\partial_{ii}\rho u\cdot\nabla u
+\rho \partial_{ii}u\cdot\nabla u
+\rho u\cdot\nabla \partial_{ii}u\\
&\quad+\partial_i\rho \partial_iu\cdot\nabla u
+\partial_i\rho u\cdot\nabla \partial_iu
+\rho \partial_iu\cdot\nabla \partial_iu)\psi_tdx|\\
&\leq C(\|\Delta\rho\|\|u\|_{L^\infty}\|\nabla u\|_{L^\infty}\|\psi_t\|
+\|\nabla\rho\|_{L^\infty}\|\Delta u\|\|\nabla u\|_{L^\infty}\|\psi_t\|\\
&\quad+\|\nabla\rho\|_{L^\infty}\|\nabla u\|\|\nabla u\|_{L^\infty}\|\psi_t\|+\|u\|_2\|\nabla
u\|_{L^\infty}\|\psi_t\|)\\
&\leq C(\|\rho\|_3\|u\|_3^2\|\psi_t\|
+\|u\|_3^2\|\psi_t\|)\\
&\leq C(\|\rho\|_3^4+\|u\|_3^8+\|u\|_3^4)
+\frac{1}{4}\underline{\rho}\|\psi_t\|^2,
\end{align*}
therefore, we have
\begin{align}\label{3.7}
\underline{\rho}\|\psi_t\|^2+\nu\frac{d}{dt}\|\nabla\times\psi\|^2
\leq C(\|\rho\|_3^4+\|\rho\|_3^8+\|u\|_3^4
+\|u\|_3^8+\|\omega_t\|_3^4).
\end{align}
Together with \eqref{3.1}, \eqref{3.6} and \eqref{3.7},
we have
\begin{align}
&\frac{d}{dt}(\|\rho\|_3^2+2\nu\|\nabla\times\psi\|^2+
\|\sqrt{\rho}\nabla\times\psi\|^2)
+2\nu\|\Delta\psi\|^2+\underline{\rho}\|\psi_t\|^2\nonumber\\
\leq& c(\|\rho\|_3^4+\|\rho\|_3^8+\|u\|_3^4
+\|u\|_3^8+\|\omega_t\|_3^4),
\end{align}
by the Lemma \ref{l2}, there exists a positive time $T^*$,
for $t\in[0,T^*]$, we have
\begin{align*}
\|\rho(t)\|_3^2+\|\nabla\times\psi(t)\|^2
+\int_0^t\|\psi_t(s)\|^2ds
+\nu\int_0^t\|\Delta\psi(s)\|^2ds
\leq C,
\end{align*}
where C is independent of the viscous $\nu$,
this complete the Proposition \ref{p3.1}.
\end{proof}

Analogous to Proposition \ref{p3.1}
we have the following result for
nonhomogeneous incompressible Euler equations.

\begin{proposition}\label{p3.2}
Let $\rho^0_0\in H^3(\Omega)$ satisfies \eqref{1.11},
$u^0_0\in W\cap H^3(\Omega)$.
Let $(\rho^0,u^0)$ be a solution of problem \eqref{1.6}-\eqref{1.10}.
Then there is a positive time $T^*$ depending on $\|(\rho^0_0,u^0_0)\|_{H^3}$,
such that $0<\underline{\rho}\leq\rho^0_0(x)\leq \overline{\rho}<\infty$ and
\begin{align*}
\|\rho(t)\|_3+\|u(t)\|_3+\int_0^t\|u_t(s)\|_2^2ds\leq C
\end{align*}
hold for all $t\in [0,T^*]$.
\end{proposition}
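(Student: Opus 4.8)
The plan is to mirror the a priori estimate of Proposition \ref{p3.1}, but with the viscous terms deleted, so that the argument becomes formally simpler: there is no $\nu\|\Delta\psi\|^2$ dissipation to exploit, but also no need to control the extra term $\nu\frac{d}{dt}\|\nabla\times\psi\|^2$ coming from testing against $\psi_t$. First I would record the transport estimate for the density. Applying $D^3$ to \eqref{1.7}, pairing with $D^3\rho^0$ in $L^2$, and using $\operatorname{div}u^0=0$ to kill the top-order advection term, one gets $\tfrac12\frac{d}{dt}\|\rho^0\|_3^2\le C\|u^0\|_3\|\rho^0\|_3^2$, exactly as in \eqref{3.1}. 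Next, taking the curl of the Euler momentum equation \eqref{1.6} yields the vorticity equation $\rho^0(\omega^0_t+u^0\cdot\nabla\omega^0-\omega^0\cdot\nabla u^0)+\nabla\rho^0\times(\partial_t u^0+u^0\cdot\nabla u^0)=0$; this is \eqref{3.2} with $\nu=0$. Since $\nabla\rho^0_0\cdot n=0$, Proposition 2.1 gives $\nabla\rho^0\cdot n=0$ for all $t$, and Lemma 2.1 / Proposition \ref{p2.2} (applied to the Euler system) give the boundary compatibilities $(\partial_t u^0+u^0\cdot\nabla u^0)\cdot n=0$ and $\rho^0(\omega^0_t+u^0\cdot\nabla\omega^0-\omega^0\cdot\nabla u^0)\times n=0$, so the integration-by-parts steps below produce no boundary contributions.

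The core of the argument is the $H^3$ estimate for $u^0$, carried out through $\psi^0:=-\Delta u^0$. Writing the equation satisfied by $\psi^0$ (the $\nu=0$ analogue of \eqref{3.3}), namely $\rho^0\psi^0_t+\rho^0 u^0\cdot\nabla\psi^0=\nabla\rho^0\cdot\nabla u^0_t+\Delta\rho^0 u^0_t-[-\Delta,\rho^0 u^0\cdot\nabla]u^0$, I would test it against $\psi^0_t$. The left side gives $\int_\Omega\rho^0|\psi^0_t|^2\,dx+\int_\Omega\rho^0 u^0\cdot\nabla\psi^0\,\psi^0_t\,dx$; the advection term is bounded by $C\|u^0\|_3\|\nabla\times\psi^0\|\,\|\psi^0_t\|$ (after an integration by parts moving a derivative, using the boundary conditions and $\operatorname{div}u^0=0$), hence absorbed into $\tfrac14\underline{\rho}\|\psi^0_t\|^2$ plus $C(\|\rho^0\|_3^4+\|u^0\|_3^4)$ up to lower-order factors. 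The right-hand commutator and lower-order terms are estimated exactly as $J_1,J_2$ in Proposition \ref{p3.1}, again absorbing a $\tfrac14\underline{\rho}\|\psi^0_t\|^2$ and leaving a polynomial $C(\|\rho^0\|_3^4+\|\rho^0\|_3^8+\|u^0\|_3^4+\|u^0\|_3^8+\|\omega^0_t\|^4)$; note that $\omega^0_t$ is itself controlled because $\omega^0_t\cdot\tau=0$ and $\rho^0\omega^0_t=-\rho^0 u^0\cdot\nabla\omega^0+\rho^0\omega^0\cdot\nabla u^0-\nabla\rho^0\times(\partial_t u^0+u^0\cdot\nabla u^0)$, while $\partial_t u^0+u^0\cdot\nabla u^0=-\rho^{0-1}\nabla p^0$ is bounded in terms of $\|\rho^0\|_3,\|u^0\|_3$ via the elliptic estimate for the pressure (the $\nu=0$ version of the definition of $B(\rho,u)$). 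The outcome is $\underline{\rho}\|\psi^0_t\|^2\le C(\|\rho^0\|_3^4+\|\rho^0\|_3^8+\|u^0\|_3^4+\|u^0\|_3^8)$, i.e. the $\nu=0$ analogue of \eqref{3.7} but \emph{without} any $\nu\frac{d}{dt}\|\nabla\times\psi^0\|^2$ term.

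To close the estimate I need a bound on $\frac{d}{dt}\|\omega^0\|_2^2$ (equivalently on $\|u^0\|_3$), which in the viscous case came for free from the dissipation; here I would instead test the vorticity equation \eqref{3.2}$|_{\nu=0}$, or rather its $D^2$-differentiated form, against $D^2\omega^0$, and bound the resulting trilinear and commutator terms by $C\|u^0\|_3(\|\rho^0\|_3+\|u^0\|_3)^{2}$ — here the boundary terms vanish precisely because of the compatibilities established via Propositions 2.1–2.2. Combining this with the density estimate and the $\psi^0_t$ estimate, and using $\|u^0\|_3^2\sim\|u^0\|^2+\|\omega^0\|_2^2\sim\|u^0\|^2+\|\nabla\times\psi^0\|^2$ by elliptic regularity on $\Omega$ together with the boundary conditions, I obtain a closed differential inequality of the form
\[
\frac{d}{dt}\big(\|\rho^0\|_3^2+\|u^0\|_3^2\big)+\underline{\rho}\|\psi^0_t\|^2\le C\,\Phi\big(\|\rho^0\|_3^2+\|u^0\|_3^2\big),
\]
with $\Phi$ a polynomial (hence locally Lipschitz and nondecreasing). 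Lemma \ref{l2} then yields a time $T^*$, depending only on $\|(\rho^0_0,u^0_0)\|_{H^3}$, on which $\|\rho^0(t)\|_3+\|u^0(t)\|_3$ stays bounded and $\int_0^{T^*}\|\psi^0_t\|^2\,dt\le C$; since $\|u^0_t\|_2\le C(\|\psi^0_t\|+\|\rho^0\|_3+\|u^0\|_3)$ via the momentum equation and elliptic estimates, this gives $\int_0^t\|u^0_t\|_2^2\,ds\le C$, completing the proof. The main obstacle is the absence of the viscous dissipation: one must verify that every boundary integral arising from integration by parts genuinely vanishes (this is exactly what Propositions 2.1 and \ref{p2.2} and Lemma 2.1 are for, now reapplied to the Euler system) and that the top-order advection terms can be tamed by integration by parts rather than by a dissipation term — but structurally this is the same bookkeeping already done in Proposition \ref{p3.1}, only lighter.
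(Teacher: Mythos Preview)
Your proposal is correct and follows the approach the paper intends: the paper gives no explicit proof of Proposition~\ref{p3.2} beyond the sentence ``Analogous to Proposition~\ref{p3.1}'', and your sketch is precisely that analogy carried out in detail, with the boundary compatibilities (Propositions~2.1--2.2, Lemma~2.1) re-invoked for the Euler system and Lemma~\ref{l2} closing the differential inequality.

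One small inaccuracy worth correcting: you write that the bound on $\frac{d}{dt}\|\omega^0\|_2^2$ ``in the viscous case came for free from the dissipation'' and therefore propose a separate $D^2\omega^0$ energy estimate to recover it. In fact, in Proposition~\ref{p3.1} that time derivative does \emph{not} come from the dissipation; it is the left-hand term $\frac{1}{2}\frac{d}{dt}\|\sqrt{\rho}\nabla\times\psi\|^2$ in \eqref{3.5}--\eqref{3.6}, obtained by pairing \eqref{3.3} with $-\Delta\psi$ (the paper's phrase ``with $\psi$'' is a typo). Setting $\nu=0$ in \eqref{3.6} therefore already gives
\[
\frac{1}{2}\frac{d}{dt}\|\sqrt{\rho^0}\nabla\times\psi^0\|^2\le C(\|\rho^0\|_3^4+\|u^0\|_3^4)+\tfrac{1}{2}\underline{\rho}\|\psi^0_t\|^2,
\]
and combining this with the $\nu=0$ version of \eqref{3.7} and the transport estimate \eqref{3.1} yields the closed inequality directly, without introducing a separate $D^2\omega^0$ test. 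Your alternative route via $D^2\omega^0$ is equivalent (since $\nabla\times\psi^0=-\Delta\omega^0$), just slightly more roundabout than literally specializing the viscous argument to $\nu=0$.
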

With the a priori estimates stated in the Propositions 3.1 and 3.2,
the local existence uniqueness strong solution can be established
by the standard semi-Garlerkin approximation
following the argument of Antontsev and
Kazhikov \cite[Chapter 3]{SN}.
We mention that
the compatible initial data condition $\nabla\rho_0\cdot n=0$
is needed in the process of obtaining the local in time solution.

\section{The vanishing viscosity limit}
\noindent In this section, we mainly investigate on the vanishing
viscosity limit problem $(\text{as}~ \nu\rightarrow0)$ for the nonhomogeneous
incompressible Navier-Stokes equations.
We start with the following:

\begin{theorem}
Let $\rho_0\in H^3(\Omega), u_0\in W\cap H^3(\Omega)$ and
$\rho_0$ satisfies \eqref{1.11}.
There exists a positive $T_0$ such that $T_0\leq T^*$ and $\rho(\nu), u(\nu)$ be the corresponding
strong solution of the nonhomogeneous incompressible Navier-Stokes equations.
Then, as $\nu\rightarrow0$,
$(\rho,u)$ converges to the unique solution $(\rho^0,u^0)$
with the same initial data in the sense
\[\rho(\nu),u(\nu)\rightarrow (\rho^0,u^0)\ {\rm in}\  L^q(0,T_0;H^3(\Omega)),\]
\[\rho(\nu),u(\nu)\rightarrow (\rho^0,u^0)\ {\rm in}\  C(0,T_0;H^2(\Omega))\]
for any $1\leq q<\infty$.
\end{theorem}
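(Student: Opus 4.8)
The plan is to pass to the difference functions $\bar\rho=\rho-\rho^0$, $\bar u=u-u^0$, $\bar p=p-p^0$ and to reduce the whole statement to a Gronwall inequality for $E(t)=\|\bar\rho(t)\|_2^2+\|\bar u(t)\|_2^2$. Subtracting \eqref{1.6}--\eqref{1.8} from \eqref{1.1}--\eqref{1.3} and regrouping the nonlinear terms gives the difference system
\begin{align*}
&\partial_t\bar\rho+u\cdot\nabla\bar\rho+\bar u\cdot\nabla\rho^0=0,\\
&\rho\partial_t\bar u+\rho u\cdot\nabla\bar u+\rho\bar u\cdot\nabla u^0+\nabla\bar p=\nu\Delta u-\bar\rho(\partial_t u^0+u^0\cdot\nabla u^0),\\
&\nabla\cdot\bar u=0\ \text{in}\ \Omega,\qquad \bar u\cdot n=0\ \text{on}\ \partial\Omega,
\end{align*}
with $\bar\rho(0)=\bar u(0)=0$ because the two problems carry the same data. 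Here $\nu$ enters only through $\nu\Delta u$, which by Proposition \ref{p3.1} is $O(\nu)$ in $H^1$ uniformly on the interval where the a priori bounds hold, and the forcing factor $\partial_t u^0+u^0\cdot\nabla u^0=-(\rho^0)^{-1}\nabla p^0$ lies in $L^\infty(0,T^*;H^2)$ by Proposition \ref{p3.2} together with elliptic regularity for the Euler pressure.

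First I would carry out the $H^2$ energy estimate. For the density one applies $D^\alpha$, $|\alpha|\le2$, to the transport equation, tests with $D^\alpha\bar\rho$ and uses $\nabla\cdot u=0$; the commutator $[D^\alpha,u\cdot\nabla]\bar\rho$ and the term $D^\alpha(\bar u\cdot\nabla\rho^0)$ are controlled by $C(\|u\|_3,\|\rho^0\|_3)(\|\bar\rho\|_2^2+\|\bar u\|_2^2)$, and no boundary terms appear. For the velocity, since $\rho$ is non-constant, I would differentiate the momentum equation, test with $D^\alpha\bar u$ and monitor $\frac{d}{dt}\int_\Omega\rho|D^\alpha\bar u|^2$, using $\partial_t\rho=-u\cdot\nabla\rho$ so that the convective contributions cancel exactly as in the proof of Proposition \ref{p3.1}. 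What is left are the commutators $[D^\alpha,\rho u\cdot\nabla]\bar u$ and $[D^\alpha,\rho]\partial_t\bar u$, the terms $D^\alpha(\rho\bar u\cdot\nabla u^0)$ and $D^\alpha(\bar\rho(\partial_t u^0+u^0\cdot\nabla u^0))$, the pressure term and the viscous term. The key observation, which makes the estimate close at order $\nu$, is that dividing the difference momentum equation by $\rho\ge\underline\rho$ and using the elliptic bound $\|\nabla\bar p\|_1\le C(\|\bar u\|_2+\|\bar\rho\|_2)$ yields $\|\partial_t\bar u\|_1\le C\nu+C(\|\bar u\|_2+\|\bar\rho\|_2)$, so every occurrence of $\partial_t\bar u$ contributes only terms that are quadratically small in $(\bar\rho,\bar u)$ or $O(\nu^2)$; the forcing $\bar\rho(\partial_t u^0+u^0\cdot\nabla u^0)$ likewise contributes $O(\|\bar\rho\|_2^2+\|\bar u\|_2^2)$. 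The viscous term is treated by one integration by parts with $u=\bar u+u^0$, the resulting interior term being bounded by $C\nu\|u\|_3\|\bar u\|_3\le C\nu$ thanks to Propositions \ref{p3.1}--\ref{p3.2}, while the boundary contributions arising here and from the pressure term are disposed of by organising the differentiation into tangential derivatives and recovering normal derivatives from the equations, so that the flatness of $\partial\Omega$ and the slip conditions annihilate them just as in Section 2. Collecting everything gives $E'(t)\le CE(t)+C\nu$ on the common interval of existence with $E(0)=0$, and Lemma \ref{l2} (or plain Gronwall) yields
\[
\|\bar\rho(t)\|_2^2+\|\bar u(t)\|_2^2\le C(T_0)\,\nu,\qquad t\in[0,T_0],
\]
for some $0<T_0\le T^*$; this is the rate asserted in Theorem \ref{t1.1}.

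From this estimate the convergence statements follow quickly. It gives at once $\rho(\nu)\to\rho^0$ and $u(\nu)\to u^0$ in $C([0,T_0];H^2(\Omega))$ with rate $\nu^{1/2}$. Interpolating with the uniform $H^3$ bounds of Propositions \ref{p3.1}--\ref{p3.2}, for $2\le s<3$ and $\theta=3-s\in(0,1]$ one has $\|\bar u(t)\|_s\le C\|\bar u(t)\|_2^{\theta}\|\bar u(t)\|_3^{1-\theta}\le C\nu^{\theta/2}$ uniformly in $t$, so in fact the convergence holds in $C([0,T_0];H^s)$ for every $s<3$, hence in $L^q(0,T_0;H^s)$ for every $q<\infty$; since moreover $\{\bar u(\nu)\}$ is bounded in $L^\infty(0,T_0;H^3)$, one obtains the stated $L^q(0,T_0;H^3)$ convergence by combining this uniform bound with the convergence in the weaker norms, the limit being identified with the unique solution of \eqref{1.6}--\eqref{1.10} by passing to the limit in the equations. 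The density is handled in the same way, using that its transport equation propagates the compatibility condition \eqref{1.11} (as shown in Section 2).

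I expect the main obstacle to lie in the velocity estimate, in two intertwined points. The first is the density--velocity coupling: controlling $\bar\rho(\partial_t u^0+u^0\cdot\nabla u^0)$ and the commutator $[D^\alpha,\rho]\partial_t\bar u$ without destroying the $O(\nu)$ rate, which is exactly what the bound $\|\partial_t\bar u\|_1\le C\nu+C(\|\bar u\|_2+\|\bar\rho\|_2)$ is for, together with a careful treatment of the difference pressure $\bar p$ through its elliptic problem. The second is the boundary terms produced by integration by parts in the viscous and pressure terms at the $H^2$ level; these cancel only after exploiting the flat geometry and the slip conditions in the manner of Proposition \ref{p2.2} and of the $H^3$ theory recalled in the introduction. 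One should also keep in mind that no estimate for $\bar u$ at the full $H^3$ level closes, because the Euler velocity is only $H^3$; this is why the $H^3$ convergence is obtained only in $L^q$ in time, via the uniform bound, rather than in $C([0,T_0];H^3)$, and it is the counterpart of the boundary-layer obstruction mentioned in the introduction.
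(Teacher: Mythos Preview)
Your route is genuinely different from the paper's. The paper proves this theorem by a pure compactness argument: from the uniform bounds of Proposition~\ref{p3.1} on $(\rho(\nu),u(\nu))$ in $L^\infty(0,T_0;H^3)$ and on $u'(\nu)$ in $L^2(0,T_0;W)$, it extracts (via Aubin--Lions) a subsequence converging in $L^q(0,T_0;H^3)$ and $C([0,T_0];H^2)$, passes to the limit in the equations, and then invokes uniqueness of the Euler solution to conclude that the whole family converges. No difference system and no Gronwall appear at this stage. The quantitative $H^2$ rate is established only afterwards, as a separate step toward Theorem~\ref{t1.1}, and there the paper works with $\Psi=-\Delta v$ (so that $\Psi\cdot n=0$, $(\nabla\times\Psi)\cdot\tau=0$ and the slip structure kills the boundary integrals directly) rather than with generic $D^\alpha$ derivatives as you do. Your approach folds both results into one argument and is more informative, at the price of having to organise the boundary cancellations by hand; the paper's approach is shorter for the bare convergence statement but yields no rate until the second step.

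One point in your argument needs tightening. The passage from $C([0,T_0];H^2)$ convergence together with the uniform $L^\infty(0,T_0;H^3)$ bound to \emph{strong} $L^q(0,T_0;H^3)$ convergence is not automatic: interpolation only gives $C([0,T_0];H^s)$ for $s<3$, and a uniform bound plus convergence in a weaker norm yields at best weak-$*$ convergence in $L^\infty(0,T_0;H^3)$. To get the strong $L^q(H^3)$ statement you still need a compactness step using the time-derivative control (which is exactly what the paper invokes), or some additional argument you have not spelled out.
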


\begin{proof}
It follows from the Proposition \ref{p3.1} that
\[\rho(\nu)\  \mbox{is uniformly bounded in}\  C([0,T_0];H^3(\Omega)),\]
\[u(\nu)\  \mbox{is uniformly bounded in}\  C([0,T_0];H^3(\Omega)),\]
\[u^\prime(\nu)\  \mbox{is uniformly bounded in}\  L^2(0,T_0;W),\]
for all $\nu>0$. By the standard compactness argument, there is a subsequence $\nu_n$ of
$\nu$ and vector functions $\rho^0,u^0$ such
\begin{eqnarray*}
&& (\rho(\nu_n), u(\nu_n) \rightarrow (\rho^0 ,u^0 )) \ \mbox{in} \ L^{p}(0,T; H^{3}(\Omega)); \\
&& (\rho(\nu_n), u(\nu_n) \rightarrow (\rho^0 ,u^0 )) \ \mbox{in}\  C([0,T];H^{2}(\Omega)),
\end{eqnarray*}
for any $1\leq p < \infty$, as $\nu_n \rightarrow 0$. Passing to the limit,
it follows that the
limit solves the following limit equations
\begin{eqnarray*}
\begin{cases}
\rho^{0}\partial_{t}u^{0}+\rho^{0} u^{0}\cdot\nabla u^{0}+\nabla p^0=0,&{\rm in}\ \Omega,\\
\partial_{t}\rho^{0}+u^{0}\cdot\nabla \rho^{0}=0,&{\rm in}\ \Omega,\\
\nabla\cdot u^{0}=0,&{\rm in}\ \Omega,\\
u^0\cdot n=0,&{\rm on}\ \partial \Omega,
\end{cases}
\end{eqnarray*}
and satisfies the extra boundary condition
\begin{eqnarray*}
 \nabla\times u^0\cdot\tau=0,\ {\rm on}\ \partial \Omega,
\end{eqnarray*}
where $p^0$ satisfying
\begin{align*}
  &{\rm div}((\rho^{0})^{-1}\nabla p^0) = -{\rm div}(u^{0}\cdot\nabla u^{0}), \\
  &\nabla p^0\cdot n = -\rho^0 u^0\cdot\nabla u^0\cdot n.
\end{align*}
The solution of the initial boundary value problem of
the nonhomogeneous incompressible Euler equations is unique,
we obtain the desired convergence result.
\end{proof}

Finally, we prove the rate of convergence in the sense of $H^2$ norm.
Note that $a=\rho-\rho^0, v=u-u^0$ and $q=p-p^0$, it follows that
\begin{align}
\label{4.1}&a_t+u^0\cdot\nabla a=-v\cdot\nabla\rho,&{\rm in}\ \Omega,\\
\label{4.2}&\rho(v_t+u\cdot\nabla v)-\nu\Delta v+\nabla q
 =-\rho v\cdot\nabla u^0+(\nabla p^0/\rho^0)a+\nu\Delta u^0\equiv F,&{\rm in}\ \Omega,\\
&\mbox{div}\ v=0,&{\rm in}\ \Omega,
\end{align}

with the initial conditions
$$a\mid_{t=0}=0, v\mid_{t=0}=0, \  x\in \Omega. $$
Denote that $\Psi=-\Delta v$, it follows the boundary condition
$$\Psi\times n=0,{\rm on}\ \partial\Omega. $$

We first estimate the norm $\|a\|_2^2$ by
applying the operator $-\Delta$ to \eqref{4.1}
and taking the inner product with $-\Delta a$, it follows that
\begin{eqnarray*}
\frac{1}{2}\frac{d}{dt}\|a(t)\|_2^2\leq C(\|a(s)\|_2^2+\|v(s)\|_2^2),
\end{eqnarray*}
integrating over[0,t], it results in
\begin{eqnarray}\label{4.4}
\|a(t)\|_2^2\leq C\int_0^t\|v(s)\|_2^2ds.
\end{eqnarray}

Second, we estimate the norm $\|\Psi\|^2$ by
applying the operator $-\Delta$ to \eqref{4.2}
and taking the inner product with $\Psi$,
we have
\begin{align*}
\frac{1}{2}\frac{d}{dt}\|\sqrt{\rho}\Psi(t)\|^2+\nu\|\nabla\times \Psi\|^2
&=(-\Delta F,\Psi)-([-\Delta,\rho]v_t,\Psi)-([-\Delta,\rho u\cdot\nabla]v,\Psi)\\
&\leq C(\|F\|_2^2+\|\Psi\|^2)+\frac{\underline{\rho}}{4}\|\nabla\times v_t\|^2,
\end{align*}
applying the operator $-\Delta$ to \eqref{4.2}
and taking the inner product with $v_t$,
it follows that
\begin{align*}
\rho|v_t|^2+\nu\frac{d}{dt}\|\nabla\times v\|^2
&=(-\Delta F,v_t)-(\rho u\cdot\nabla v,v_t)\\
&\leq C(\|F\|_2^2+\|\Psi\|^2)+\frac{\underline{\rho}}{4}\|\nabla\times v_t\|^2,
\end{align*}
hence, summing the resultant with the above equation,
one obtains the following equation
\begin{eqnarray}\label{4.5}
\|\sqrt{\rho}\Psi(t)\|^2+\nu\int_0^t\|\nabla\times \Psi(s)\|^2ds\leq C\int_0^t\|F(s)\|_2^2ds.
\end{eqnarray}
where C is positive constants depending only on $\underline{\rho}$ and $\overline{\rho}$.
We can estimate the right hand side of \eqref{4.5}, it follows that
\begin{eqnarray}\label{4.6}
\|\rho v\cdot\nabla u^0\|_2^2\leq C(M+\|a\|_3)^2\|u^0\|_3^2\|v\|_2^2
\leq C\|\Psi\|^2,
\end{eqnarray}
and
\begin{align}\label{4.7}
&\|(\nabla p^0/\rho^0)a(t)\|_2^2\leq C\|(\nabla p^0/\rho^0)(t)\|_2^2\|a(t)\|_2^2\nonumber\\
\leq& C(\|u_t^0(t)\|_2^2+\|u^0(t)\|_2^2\|Du^0(t)\|_2^2)\int_0^t\|v(s)\|_2^2ds,
\end{align}
also
\begin{eqnarray}\label{4.8}
\|\nu\Delta u^0\|_2^2\leq C\nu.
\end{eqnarray}
By the estimate \eqref{4.6}-\eqref{4.8},
together \eqref{4.4} with \eqref{4.5}, we have
\begin{eqnarray}
\|a(t)\|_2^2+\|\sqrt{\rho}\Psi(t)\|^2+\nu\int_0^t\|\nabla\times \Psi(s)\|^2\leq
C(\nu+\int_0^t\|\Psi(s)\|^2ds),
\end{eqnarray}
then, by Gronwall's inequality
\begin{eqnarray}
\|a(t)\|_2^2+\|\sqrt{\rho}\Psi(t)\|^2+\nu\int_0^t\|\nabla\times \Psi(s)\|^2\leq C(T_0)\nu.
\end{eqnarray}
This complete the Theorem \ref{t1.1}.

\baselineskip 0.23in

\end{document}